\newtheorem{thm}{Theorem}
\newtheorem{cor}{Corollary}
\newcommand{\fq}{\mathbb{F}_q}
\newcommand{\F}{\mathbb{F}}
\newcommand{\rmv}[1]{}
\begin{document}


\begin{frontmatter}
\title{A note on inverses of cyclotomic mapping permutation polynomials over finite fields\tnoteref{t1}}
\tnotetext[t1]{Qiang Wang's research is partially supported by NSERC of Canada. 
}

\author[Wang]{Qiang Wang}
\ead{wang@math.carleton.ca}
\address[Wang]{School of Mathematics and Statistics,
Carleton University,
Ottawa, ON K1S 5B6,
Canada}
\cortext[cor1]{Corresponding author.}


\rmv{
\title[inverses of cyclotomic mapping permutation polynomials]{A note on inverses of cyclotomic mapping permutation polynomials over finite fields}

\author[Wang]{Qiang Wang}
\thanks{Qiang Wang's research is partially supported by NSERC of Canada.}
\address{School of Mathematics and Statistics,
Carleton University, Ottawa, Ontario,\\ $K1S$ $5B6$, CANADA}

\email{wang@math.carleton.ca}

\keywords{\noindent finite fields, permutation polynomials, inverse polynomials}

}

\begin{abstract}
In this note, we give a shorter proof of the result of Zheng, Yu, and Pei on the explicit formula of inverses of generalized cyclotomic permutation polynomials over finite fields. Moreover, we characterize all these cyclotomic permutation polynomials that are involutions. Our results provide a fast algorithm (only modular operations are involved) to generate many classes of generalized cyclotomic permutation polynomials, their inverses, and involutions. 
\end{abstract}



\begin{keyword}
finite fields \sep permutation polynomials \sep inverse polynomials  \sep cyclotomic mappings \sep involutions

{\rm MSC}: 11T06
\end{keyword}

\end{frontmatter}


 Let $q = p^m$ be the power of a prime number $p$, $\F_q$ be a finite field with $q$ elements, and  $\F_q[x]$ be the ring of polynomials over $\F_q$. 
 We denote the composition of two polynomials $P(x), Q(x) \in \F[x]$ by $(P \circ Q) (x) := P(Q(x))$. We call $P(x) \in \F_q[x]$ a {\em permutation polynomial} (PP) of $\F_q$ 
 if it induces a permutation of $\F_q$. Note that since $x^q = x$ for all $x \in \F_q$, one only needs to consider polynomials of degree less than $q$.
 It is clear that permutation polynomials form a group under composition and reduction modulo $x^q - x$ that is isomorphic to the symmetric group on $q$ letters. Thus for any permutation polynomial $P(x) \in \F_q[x]$,
 there exists a unique $P^{-1}(x) \in \F_q[x]$ such that $P^{-1}(P(x)) \equiv P(P^{-1}(x)) \equiv x \pmod{x^q - x}$. Here $P^{-1}(x)$ is defined as the {\em compositional inverse} of $P(x)$,
 although we may simply call it sometimes the {\em inverse} of $P(x)$ on $\F_q$. 

In \cite{Mullen:91}, Mullen posed the problem of computing the coefficients of
the inverse polynomial of a permutation polynomial efficiently
(Problem 10). In fact, there are very few known permutation polynomials whose
 explicit compositional inverses have been obtained, and the resulting expressions are usually of a complicated nature except for the classes of the 
 permutation linear polynomials, 
 monomials, Dickson
 polynomials. 
Among them,  see \cite{WuLiu:13, Wu:14} for the inverses of linearized PPs and see \cite{CoulterHenderson:02, WuLiu:13}
for inverses of some classes of bilinear PPs.  A generalization of these results can be found in \cite{TuxanidyWang:14}. We note that each polynomial can be written uniquely in the form 
$x^rf(x^{(q-1)/\ell})$ where $\ell \mid (q-1)$ is so-called the index of the polynomial \cite{AkbaryGhiocaWang:09}.  Although the explicit 
  characterization of the inverses of PPs of  the  form $x^rf(x^{(q-1)/\ell})$ over $\F_q$ can be found in \cite{Muratovic:07} and \cite{Wang:09},  computing these inverses of PPs in general is still not efficient if the index $\ell$ is large.

Let $\gamma$ be a fixed primitive element of $\mathbb{F}_q$ throughout the paper, $\ell \mid q-1$, and the set
of all nonzero $\ell$-th powers be $C_0$. Then $C_0$ is a subgroup of $\mathbb{F}_q^*$ of
index $\ell$. The elements of the factor group $\mathbb{F}_q^*/C_0$
are the {\it cyclotomic cosets}
$$ C_i := \gamma^i C_0, \  \  \  i = 0, 1, \cdots, \ell-1.$$

For any $A_0, A_1, \cdots, A_{\ell-1} \in \mathbb{F}_q$ and a positive integer $r$,   the  {\it $r$-th order cyclotomic mapping $f^{r}_{A_0, A_1, \cdots,
A_{\ell-1}}$ of index $\ell$ }  from $\mathbb{F}_q$ to itself is defined by
$$
f^{r}_{A_0, A_1, \cdots,
A_{\ell-1}} (x) = 
\left\{
\begin{array}{ll} 
0, &   \mbox{if} ~ x=0; \\
A_i x^{r},   &   \mbox{if} ~x \in C_i, ~ 0\leq i \leq \ell-1. \\
\end{array}
\right.
$$

Essentially  $r$-th order cyclotomic mappings of index $\ell$ produce polynomials of the form  $x^r f(x^{(q-1)/\ell})$.  
Earlier, Niederreiter and Winterhof \cite{NiederreiterWinterhof:05} and  Wang\cite{Wang:07} have studied  these cyclotomic mapping permutations. We note that a polynomial could be written in the form of 
cyclotomic mappings with different indices $\ell$'s, however, only the least index among them is defined as the index of the polynomial \cite{AkbaryGhiocaWang:09}. 
Normally it is harder to generate permutation polynomials with large indices. However, in \cite{Wang:13}, the author extended the idea of piecewise construction to obtain more classes of PPs with large indices. More specifically, different branch functions on these cyclotomic cosets were introduced. 


For any $A_0, A_1, \cdots, A_{\ell-1} \in \mathbb{F}_q$ and monic polynomials $R_0(x), \ldots,  R_{\ell-1}(x) \in \fq[x]$  we define a 
{\it  generalized cyclotomic mapping $f^{R_0(x), R_1(x), \ldots, R_{\ell-1}(x)}_{A_0, A_1, \cdots,
A_{\ell-1}}$ of index $\ell$ }  from $\mathbb{F}_q$ to itself by
\begin{equation}\label{defn}
f^{R_0(x), R_1(x), \ldots, R_{\ell-1}(x)}_{A_0, A_1, \cdots,
A_{\ell-1}} (x) = 
\left\{
\begin{array}{ll} 
0, &   \mbox{if} ~ x=0; \\
A_i R_i(x), &  \mbox{if}~ x \in C_i, ~0\leq i \leq \ell-1. \\
\end{array}
\right.
\end{equation}
Moreover, $f^{R_0(x), R_1(x), \ldots, R_{\ell-1}(x)}_{A_0, A_1, \cdots,
A_{\ell-1}}$ is called a {\it  generalized cyclotomic mapping
 of the least index $\ell$} if the mapping can not be written as
  a cyclotomic mapping of any smaller index.
The polynomial of degree at most $q-1$ representing the cyclotomic
mapping $f^{R_0(x), R_1(x), \ldots, R_{\ell-1}(x)}_{A_0, A_1, \cdots,
A_{\ell-1}} (x) $ is called  a {\it  generalized cyclotomic mapping polynomial}. In particular, when $R_0(x) = \cdots  = R_{\ell-1} (x) = x^r$ for a positive integer $r$, we have an  {\it $r$-th order cyclotomic mapping polynomial}. 

Let $s :=\frac{q-1}{\ell}$ and $\zeta := \gamma^s$ be a primitive $\ell$-th root of unity.  It is also  shown in \cite{Wang:13}, for any given generalized cyclotomic mapping of index $\ell$ as in (\ref{defn}), 
we can find a unique polynomial $P(x)$ modulo $x^q -x$ corresponding to it. Namely,
\begin{equation}\label{correspondence}
P(x) =  \frac{1}{\ell} \sum_{i=0}^{\ell-1}  \sum_{j=0}^{\ell-1} A_i \zeta^{-ji} R_i(x) x^{js}. 
\end{equation}

Furthermore,  the permutation behaviour of $P(x)$  with simple branch functions $R_i(x) = x^{r_i}$ or $x^{r_i}f_i(x)^{\ell t_i}$ for $0\leq i \leq \ell -1$ was characterized in \cite{Wang:13}.

Other related piecewise constructions of PPs can be found in \cite{CaoHuZha:14, FernandoHou:12, YuanZheng:15}. Recently, Zheng, Yuan, and Pei  \cite{ZhengYuanPei:15} studied  inverses of some of these piecewise constructed PPs where all branch functions are also PPs of $\F_q$.  Moreover, 
Zheng, Yu, and Pei  \cite{ZhengYuPei:16} use piecewise interpolation formula to find explicit expression for inverses of generalized cyclotomic mapping permutations where all branch functions are monomials (i.e. $R_i(x) = x^{r_i}$).  
In this note we give a shorter proof of
the main theorem in \cite{ZhengYuPei:16}  using cyclotomic characteristics of these polynomials. 
We recall that $\ell \mid q-1$ and $s=\frac{q-1}{\ell}$.  Let $\gamma$ be a primitive element of $\F_q$ and $\zeta = \gamma^s$ be a primitive $\ell$-th root of unity. Let  $ind_\gamma(A_i)$ be the discrete logarithm of $A_i \in \F_q^* = \F_q\setminus \{0\}$, which is an integer between $0$ and $q-2$. 

\begin{thm}
\label{thm1}
Let $A_0, \ldots, A_{\ell-1} \in \F_q^*$. 
The polynomial  
\[
P(x) = \frac{1}{\ell} \sum_{i=0}^{\ell-1} \sum_{j=0}^{\ell-1} A_i  \zeta^{-ji}x^{r_i+js} \in \F_q[x]
\]
is  a PP of $\F_q$ if and only if
$(r_i, s) =1$ for any $i=0, 1, \ldots, \ell-1$ and   $\{  ind_\gamma (A_i) +i  r_i  \mid i =0,\ldots, \ell-1\}$ is a complete set of residues modulo $\ell$.  Moreover, the inverse of $P(x)$ 
is
\[
P^{-1}(x) = \frac{1}{\ell} \sum_{k=0}^{\ell-1}  \sum_{j=0}^{\ell-1} B_k  \zeta^{-jk}x^{r_k^\prime+js}
\]
where
$k:= \varphi(i) \equiv ind_{\gamma} (A_i) + ir_i \pmod{\ell}$,  $r_{k}^\prime \equiv r_i^{-1} \pmod{s}$, and $B_{k}= A_i^{-r_{k}^\prime}\gamma^{i(1-r_ir_{k}^\prime)}$ for each $i=0, \ldots, \ell-1$. 

\end{thm}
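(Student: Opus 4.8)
\section*{Proof proposal}

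The plan is to exploit the correspondence \eqref{correspondence}: the polynomial $P(x)$ in the statement is exactly the polynomial representative of the generalized cyclotomic mapping whose branch functions are the monomials $R_i(x)=x^{r_i}$, so that $P(x)$ induces the map sending $x\in C_i$ to $A_i x^{r_i}$ and fixing $0$. The single computational fact underlying everything is that $x^{s}=\zeta^{i}$ for every $x\in C_i$ (since $x=\gamma^{i+\ell t}$ gives $x^{s}=\gamma^{is}\gamma^{(q-1)t}=\zeta^{i}$); substituting $x^{js}=\zeta^{ij}$ into the double sum and using the orthogonality relation $\sum_{j=0}^{\ell-1}\zeta^{j(i-i')}=\ell$ when $i\equiv i'\pmod\ell$ and $0$ otherwise, recovers this branchwise description. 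Thus the study of $P$ reduces to the study of the $\ell$ monomial maps $x\mapsto A_i x^{r_i}$ on the individual cosets $C_i$.

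Next I would pass to discrete logarithms. Writing a typical element of $C_i$ as $\gamma^{i+\ell t}$ with $0\le t\le s-1$, one gets $P(\gamma^{i+\ell t})=\gamma^{\,ind_\gamma(A_i)+ir_i+\ell t r_i}$, so $P$ maps $C_i$ into the single coset $C_{k_i}$ with $k_i\equiv ind_\gamma(A_i)+ir_i\pmod\ell$. Since $C_0,\dots,C_{\ell-1}$ partition $\F_q^{*}$ and all have size $s$, and $P$ fixes $0$, the polynomial $P$ is a PP of $\F_q$ if and only if (i) each restriction $C_i\to C_{k_i}$ is a bijection and (ii) the $k_i$ are pairwise distinct, i.e.\ $\{k_i\mid i=0,\dots,\ell-1\}$ is a complete residue system modulo $\ell$. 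A short argument with the exponents shows that $t\mapsto \ell t r_i$ is injective on $\{0,\dots,s-1\}$ modulo $q-1$ exactly when $\gcd(r_i,s)=1$, which is (i); this establishes the first assertion.

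For the inverse I would simply invert the coset-wise bijection. Given $y=\gamma^{k+\ell u}\in C_k$ with $k=k_i$, solving $ind_\gamma(A_i)+ir_i+\ell t r_i\equiv k+\ell u\pmod{q-1}$ for $t$ modulo $s$ requires $r_i^{-1}\bmod s$; setting $r_k'\equiv r_i^{-1}\pmod s$ and collecting the constant factor produced by the shift yields $P^{-1}(y)=\gamma^{i+\ell t}=B_k\,y^{r_k'}$ with $B_k=\gamma^{i(1-r_ir_k')}\gamma^{-r_k'\,ind_\gamma(A_i)}=A_i^{-r_k'}\gamma^{\,i(1-r_ir_k')}$, as claimed. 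Since this description shows $P^{-1}$ sends each $C_k$ into a single coset, it is itself a generalized cyclotomic mapping with monomial branch functions, so applying \eqref{correspondence} once more to $y\mapsto B_k y^{r_k'}$ on $C_k$ turns it back into the closed polynomial form in the statement; that it is a genuine compositional inverse then follows because $P|_{C_i}$ and $P^{-1}|_{C_{k_i}}$ were built to be mutually inverse bijections.

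The step needing the most care is the bookkeeping across the three moduli $\ell$, $s$, and $q-1=\ell s$: one must consistently reduce the ``coset label'' part of an exponent modulo $\ell$ and the ``position within a coset'' part modulo $s$, and verify that lifting the congruence $tr_i\equiv\cdots\pmod s$ to an actual exponent modulo $q-1$ produces exactly the factor $\gamma^{i(1-r_ir_k')}$ and no spurious power of $\zeta$. A secondary point, already noted above, is confirming that $P^{-1}$ has the cyclotomic shape needed to invoke \eqref{correspondence}; this is immediate from the construction.
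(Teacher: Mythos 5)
Your proposal is correct and follows essentially the same route as the paper: both reduce $P$ to the coset-wise monomial maps $x\mapsto A_i x^{r_i}$ on $C_i$, observe that $P$ sends $C_i$ onto $C_{\varphi(i)}$, and invert branch by branch to obtain $r_k'\equiv r_i^{-1}\pmod{s}$ and $B_k=A_i^{-r_k'}\gamma^{i(1-r_ir_k')}$. The only difference is that you supply a direct discrete-logarithm argument for the permutation criterion, which the paper simply cites from an earlier work, and your bookkeeping of the exponents modulo $\ell$, $s$, and $q-1$ checks out.
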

\begin{proof} The first part is a result in \cite{Wang:13} and we omit the proof.  Obviously $P(x) = A_i x^{r_i}$ if $x\in C_i$ and thus  $P(x)$ maps $C_i$ to $C_k$,  where  $k \equiv ind_{\gamma} (A_i) + ir_i \pmod{\ell}$ and  $A_i^s \zeta^{ir_i} = \zeta^k$. Hence $P^{-1}(x)$ is also a generalized cyclotomic mapping of the same index and it must maps $C_k$ to $C_i$ injectively.  Denote $P^{-1}(x) = B_k x^{r_k^\prime}$ if $x\in C_k$.  Therefore
\[
B_k (A_i ( \gamma^{i+j\ell})^{r_i})^{r_k^\prime} = \gamma^{i+j\ell}, ~~~j=0, 1, \ldots, s-1
\] 
Because the choices of $B_k$ and $r_k^\prime$ are independent of $j$, we must have $r_k^\prime \equiv r_i^{-1} \pmod{s}$, and $B_k= A_i^{-r_k^\prime}\gamma^{i(1-r_ir_k^\prime)}$.  
\end{proof}

We note $(r_i, s) =1$ implies that  there exists integers  $g_i, t_i$ such that $r_i g_i + s t_i =1$ for any  $i=0, 1, \ldots, \ell-1$. Therefore Theorem~\ref{thm1}  is equivalent to Theorem 3.3 in \cite{ZhengYuPei:16}, because  the coefficient of $x^{r_k+js}$ in the inverse PP is 
\begin{eqnarray*}
B_k \zeta^{-jk} & =& B_k  (A_i^s \zeta^{ir_i})^{-j} \\
&=& A_i^{-r_k^\prime-js}\gamma^{i(1-r_ir_k^\prime) -jsir_i} \\
& =& A_i^{-(r_k^\prime+js)} \zeta^{-i( \frac{r_ig_i-1}{s} + jr_i)} \\
& = & A_i^{-(r_k^\prime+js)} \zeta^{-i( t_k + jr_i)}  .
\end{eqnarray*}

Our Theorem~\ref{thm1} gives more information on  these inverse PPs because the image sets are specified through these cyclotomic mappings. For example, this can help us to characterize when these PPs are involutions,  which are PPs such that their compositional inverse are themselves. Involutions have been used frequently in block cipher designs (as S-Boxes).  One immediate practical advantage of involutions is that implementation of the inverse
does not require additional resources, which is particularly useful for its implementation (as part of a block cipher) in devices with limited resources.  More details can be found in  \cite{CharpinMesnagerSumanta:15} and \cite{CharpinMesnagerSumanta:16}.  The notaton  $m = n \bmod \ell
$ means $m$ is a positive integer such that $0\leq m < \ell$ and $m\equiv n \pmod{\ell}$.  

\begin{thm}\label{thm2}
Let $A_0, \ldots, A_{\ell-1} \in \F_q^*$. 
The  polynomial  
\[
P(x) = \frac{1}{\ell} \sum_{i=0}^{\ell-1}  \sum_{j=0}^{\ell-1} A_i  \zeta^{-ji}x^{r_i+js} \in \F_q[x]
\]
is  an involution  of $\F_q$  if and only if 
(i)  there exists integers  $g_i, t_i$ such that $r_i g_i + s t_i =1$ for any  $i=0, 1, \ldots, \ell-1$; 
(ii)   $\{ \varphi(i) = ind_\gamma (A_i) + i r_i  \bmod \ell \mid i =0,\ldots, \ell-1\} = \mathbb{Z}_{\ell}$,  a complete set of residues modulo $\ell$;
(iii) $r_{\varphi(i)} \equiv r_i^{-1} \equiv g_i \pmod{s} $ for any $i=0, 1, \ldots, \ell-1$;
(iv) $A_{\varphi(i)} = A_i^{-g_i} \zeta^{it_i}$  for any $i=0, 1, \ldots, \ell-1$. 
\end{thm}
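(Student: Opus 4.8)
The plan is to use the elementary equivalence that $P$ is an involution of $\F_q$ if and only if $P$ is a permutation polynomial of $\F_q$ and $P\circ P\equiv x\pmod{x^q-x}$, and to exploit the cyclotomic description of $P$ rather than comparing the closed formulas for $P$ and $P^{-1}$ term by term. First I would dispose of the permutation part: by Theorem~\ref{thm1}, $P$ is a PP of $\F_q$ precisely when $(r_i,s)=1$ for every $i$ --- equivalently, when $r_ig_i+st_i=1$ is solvable in integers, which is condition (i) --- together with condition (ii) that $\{\varphi(i):=ind_\gamma(A_i)+ir_i\bmod\ell\}$ is a complete residue system modulo $\ell$. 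In that case $\varphi$ is a permutation of $\mathbb{Z}_\ell$, $P(x)=A_ix^{r_i}$ for $x\in C_i$, and $P$ maps $C_i$ onto $C_{\varphi(i)}$.

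Next I would compute $P\circ P$ coset by coset. For $x\in C_i$ we have $P(x)=A_ix^{r_i}\in C_{\varphi(i)}$, so $P(P(x))=A_{\varphi(i)}\bigl(A_ix^{r_i}\bigr)^{r_{\varphi(i)}}=A_{\varphi(i)}A_i^{r_{\varphi(i)}}x^{r_ir_{\varphi(i)}}$. Parametrising $x=\gamma^{i+j\ell}$ with $0\le j<s$, the condition $P(P(x))=x$ reads $A_{\varphi(i)}A_i^{r_{\varphi(i)}}\gamma^{(i+j\ell)r_ir_{\varphi(i)}}=\gamma^{i+j\ell}$ for all such $j$. Since the factor $A_{\varphi(i)}A_i^{r_{\varphi(i)}}$ is independent of $j$, dividing the instance at $j+1$ by the instance at $j$ forces $\gamma^{\ell r_ir_{\varphi(i)}}=\gamma^{\ell}$, i.e.\ $r_ir_{\varphi(i)}\equiv1\pmod s$; combined with $r_ig_i\equiv1\pmod s$ this is exactly condition (iii). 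Conversely, once $r_ir_{\varphi(i)}\equiv1\pmod s$ holds the $j$-dependence drops out and the only surviving requirement is the instance $j=0$, namely $A_{\varphi(i)}=A_i^{-r_{\varphi(i)}}\gamma^{i(1-r_ir_{\varphi(i)})}$.

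It then remains to recognise this last identity as condition (iv). Here I would fix the B\'ezout data once and for all by taking $g_i:=r_{\varphi(i)}$ --- a legitimate choice precisely because (iii) gives $r_ir_{\varphi(i)}\equiv1\pmod s$ --- and $t_i:=(1-r_ir_{\varphi(i)})/s\in\mathbb{Z}$; then $\gamma^{i(1-r_ir_{\varphi(i)})}=\gamma^{ist_i}=\zeta^{it_i}$, so the identity becomes $A_{\varphi(i)}=A_i^{-g_i}\zeta^{it_i}$, which is (iv). Running the argument in reverse, conditions (i)--(iv) produce a PP $P$ for which, with this choice of $g_i,t_i$, one first checks $\varphi^2=\mathrm{id}$ on $\mathbb{Z}_\ell$ --- from the identity $\varphi^2(i)\equiv i+(r_{\varphi(i)}-g_i)\varphi(i)\pmod\ell$, obtained by substituting the value of $ind_\gamma(A_{\varphi(i)})$ given by (iv) into $\varphi(\varphi(i))\equiv ind_\gamma(A_{\varphi(i)})+\varphi(i)r_{\varphi(i)}$, which vanishes since $g_i=r_{\varphi(i)}$ --- so that $P\circ P$ maps each $C_i$ into itself, and then that (iii) and (iv) force $P(P(x))=x$ there; hence $P$ is an involution. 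The case $x=0$ is trivial throughout.

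I expect the main obstacle to be purely the bookkeeping with the two moduli: the exponents $r_{\varphi(i)}$ and $g_i$ agree only modulo $s$, whereas the coefficient equalities take place in $\F_q^*$, where exponents matter modulo $q-1=\ell s$. Committing to $g_i=r_{\varphi(i)}$, $t_i=(1-r_ir_{\varphi(i)})/s$ from the start is what turns every step into a one-line computation, and it is also what makes the two slightly delicate points --- that the $j$-dependence genuinely forces $r_ir_{\varphi(i)}\equiv1\pmod s$, and that $\varphi$ is automatically an involution on $\mathbb{Z}_\ell$ --- come out cleanly.
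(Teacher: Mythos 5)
Your proof is correct and follows essentially the same route as the paper's: the permutation part is delegated to Theorem~\ref{thm1}, and the involution condition is reduced to the branch-wise identity $A_{\varphi(i)}\bigl(A_i x^{r_i}\bigr)^{r_{\varphi(i)}}=x$ on each coset $C_i$, which is exactly the equation the paper obtains by equating the branch data of $P$ with that of $P^{-1}$ from Theorem~\ref{thm1}. Your explicit normalization $g_i:=r_{\varphi(i)}$, $t_i:=(1-r_ir_{\varphi(i)})/s$, together with the check that $\varphi^2=\mathrm{id}$, makes precise a choice-of-B\'ezout-coefficients point that the paper leaves implicit, but the underlying computation is the same.
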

\begin{proof}
By Theorem~\ref{thm1},  $P(x)$ is a PP if and only if (i) and (ii) are both satisfied. Moreover, $P(x) = P(x)^{-1}$ if and only if  $r_{\varphi(i)} \equiv  r_{\varphi(i)}^\prime \equiv r_i^{-1} \equiv g_i \pmod{s}$ and $ A_{\varphi(i)} = B_{\varphi(i)} = A_i^{-r_{\varphi(i)}^\prime} \gamma^{ist_i} = A_i^{-g_i} \zeta^{it_i} $. 
\end{proof}

The following generalization of Corollary 4.3 in \cite{ZhengYuPei:16} corresponds to the  case $\ell =2$. 
\begin{cor}
Let $q$ be an odd prime power and $s = (q-1)/2$. Let $r_0, r_1 \in \mathbb{Z}$. Then
\[ P(x) = \frac{1}{2} x^{r_0}(1+x^s) + \frac{1}{2} x^{r_1} (1-x^s) \]
is an involution of $\F_q$ if and only if $r_i = r_i^\prime = r_i^{-1} \pmod{s}$ with $i=0, 1$ and $r_1$  is odd.
\end{cor}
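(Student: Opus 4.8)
The plan is to read the corollary off from Theorem~\ref{thm2} by specializing to $\ell=2$. First I would fix the dictionary: for $\ell=2$ one has $s=(q-1)/2$ and $\zeta=\gamma^{s}=-1$, and expanding the double sum of Theorem~\ref{thm2} with $A_0=A_1=1$ (the $i=1,\ j=1$ term picking up the factor $\zeta^{-1}=-1$) yields exactly $\tfrac12 x^{r_0}(1+x^{s})+\tfrac12 x^{r_1}(1-x^{s})$. Hence the $P(x)$ of the corollary is the polynomial of Theorem~\ref{thm2} in the case $A_0=A_1=1$, $\ell=2$. Since $ind_\gamma(1)=0$, the associated map on $\mathbb{Z}_2$ is $\varphi(0)=(0\cdot r_0)\bmod 2=0$ and $\varphi(1)=(0+r_1)\bmod 2=r_1\bmod 2$.

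Next I would translate the four conditions of Theorem~\ref{thm2} in turn. Condition~(ii), that $\{\varphi(0),\varphi(1)\}=\mathbb{Z}_2$, holds precisely when $r_1\bmod 2=1$, i.e.\ $r_1$ is odd; note this also makes $\varphi$ the identity. Condition~(i) merely asks $\gcd(r_i,s)=1$, which is already contained in condition~(iii) (there $r_i^{-1}$ is taken modulo $s$). Condition~(iii), $r_{\varphi(i)}\equiv r_i^{-1}\equiv g_i\pmod{s}$, becomes, using $\varphi(i)=i$, the pair $r_0\equiv r_0^{-1}\pmod{s}$ and $r_1\equiv r_1^{-1}\pmod{s}$, that is $r_i=r_i'=r_i^{-1}\pmod{s}$ for $i=0,1$ in the notation of Theorem~\ref{thm1}.

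The step I expect to require the most care is condition~(iv), $A_{\varphi(i)}=A_i^{-g_i}\zeta^{it_i}$. For $i=0$ it reduces to $1=1\cdot\zeta^{0}$, which is vacuous; for $i=1$ it reduces to $1=1\cdot(-1)^{t_1}$, i.e.\ $t_1$ must be even, and here one has to remember that the B\'ezout coefficients $(g_1,t_1)$ with $r_1g_1+st_1=1$ are determined only up to $(g_1+ks,\,t_1-kr_1)$, $k\in\mathbb{Z}$. Because $r_1$ is odd by condition~(ii), incrementing $k$ toggles the parity of $t_1$, so one may always select the B\'ezout pair with $t_1$ even; this selection leaves $g_1\bmod s$, hence condition~(iii), unchanged. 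Thus condition~(iv) adds nothing beyond "$r_1$ odd", and assembling (i)--(iv) gives the stated equivalence; the case $\ell=2$ of Corollary~4.3 in \cite{ZhengYuPei:16} is the further specialization in which each $r_i,r_i'$ is taken to be the canonical residue modulo $s$. As an independent sanity check I would also argue directly: $P$ fixes $0$, acts on the nonzero squares $C_0$ (where $x^{s}=1$) as $x\mapsto x^{r_0}$ and on the non-squares $C_1$ (where $x^{s}=-1$) as $x\mapsto x^{r_1}$, so it permutes $\mathbb{F}_q$ iff $x\mapsto x^{r_0}$ permutes $C_0$ and $x\mapsto x^{r_1}$ maps $C_1$ onto $C_1$ (forcing $r_1$ odd), and under these hypotheses $P\circ P=\mathrm{id}$ iff $x^{r_0^{2}}=x$ on $C_0$ and $x^{r_1^{2}}=x$ on $C_1$, which reproduces the congruence conditions on $r_0,r_1$.
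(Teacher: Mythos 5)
Your reduction to Theorem~\ref{thm2} and the translation of conditions (i)--(iii) are sound, and the overall route is the same as the paper's (the paper specializes Theorem~\ref{thm1} directly rather than walking through the four conditions of Theorem~\ref{thm2}, but the content is the same). The gap is in your treatment of condition (iv). You argue that, since the B\'ezout pair $(g_1,t_1)$ is only determined up to $(g_1+ks,\,t_1-kr_1)$ and $r_1$ is odd, one may always pick $t_1$ even, so (iv) is vacuous. But shifting the B\'ezout pair simultaneously replaces $(B_1,r_1')$ by a different representation of the \emph{same} branch function of $P^{-1}$ on $C_1$ (the new coefficient picks up a factor $\zeta^{-r_1}=-1$ exactly compensating $x^{s}=-1$), so the equation ``$A_{\varphi(1)}=B_{\varphi(1)}$'' is representation-dependent and cannot be tested against a freely chosen pair. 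What (iv) actually encodes is equality of the branch functions, i.e.\ $x^{r_1}=B_1x^{r_1'}$ on $C_1$ with $r_1'$ matched to the exponent $r_1$ appearing in $P$; equivalently, composing, $x^{r_1^2}=x$ for all $x\in C_1$, which is $r_1^2\equiv 1\pmod{q-1}$, i.e.\ $2s\mid r_1^2-1$. Reading (iv) existentially over B\'ezout pairs discards exactly this divisibility.

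The loss is not cosmetic: ``$r_1\equiv r_1^{-1}\pmod{s}$ and $r_1$ odd'' implies $2s\mid r_1^2-1$ only when $8\nmid s$. Take $q=17$, $s=8$, $r_0=1$, $r_1=3$: then $3\cdot 3\equiv 1\pmod 8$ and $r_1$ is odd, yet $16\nmid 3^2-1$ and indeed $P(P(3))=P(10)=14\neq 3$, so $P$ is not an involution on $\F_{17}$. Your own closing ``sanity check'' would have exposed this: on $C_1$ the condition $x^{r_1^2}=x$ for all $x=\gamma^{1+2j}$ is $r_1^2\equiv 1\pmod{2s}$, not the weaker pair of conditions you assert it ``reproduces.'' The criterion recorded in the last sentence of the paper's proof, $s\mid r_0^2-1$ and $2s\mid r_1^2-1$, is the correct one (and coincides with the displayed condition for fields such as $\F_{25}$, where $s=12$, which is why the tables do not show a discrepancy); a complete argument must derive the factor $2s$ on the $C_1$ branch, and yours does not.
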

\begin{proof}
 Indeed,  $P(x) = x^{r_0}$ if $x\in C_0$ and $P(x)=x^{r_1}$ if $x\in C_1$. 
Here $\ell =2$ and both $A_0 = A_1 = 1$
imply that  $0 = 0+0\cdot r_0 \mod 2$ and $1 = 0+r_1 \pmod 2$. That is, $C_0$ is mapped to $C_0$ and $C_1$ is mapped to $C_1$. Hence $r_0^\prime = r_0^{-1} \pmod{s}$ and $r_1^\prime = r_1^{-1} \pmod{s}$.  Hence  $P^{-1} = P$ if and only if $r_i = r_i^\prime = r_i^{-1} \pmod{s}$ with $i=0, 1$ and $r_1$  is odd. The latter is equivalent to $s\mid r_0^2 -1$ and $2s\mid r_1^2-1$.  
\end{proof}

Theorem~\ref{thm1} and Theorem~\ref{thm2} provide an algorithmic method to generate a lot of cyclotomic permutation polynomials, inverses, and involutions of $\F_q$. We note that each one of these polynomials is uniquely determined by two sets of integers $\{r_0, \ldots, r_{\ell-1} \} $ and $\{k_0, \ldots, k_{\ell-1}\}=\{ind_{\gamma}(A_0), \ldots, ind_{\gamma}(A_{\ell-1})\} $ such that $ 1\leq r_i \leq s$ and $0\leq k_i \leq q-2$ for all $i=0, \ldots, \ell-1$, satisfying the conditions in the above theorems. 
Indeed, Theorem 1 says that the polynomial  
\[
P(x) = \frac{1}{\ell} \sum_{i=0}^{\ell-1} \sum_{j=0}^{\ell-1} \gamma^{k_i}  \zeta^{-ji}x^{r_i+js} \in \F_q[x]
\]
is  a PP of $\F_q$ if and only if
$(r_i, s) =1$ for any $i=0, 1, \ldots, \ell-1$ and   $\{   \varphi(i) = k_i +i  r_i \bmod \ell  \mid i =0,\ldots, \ell-1\} = \mathbb{Z}_{\ell}$.  Moreover, the inverse of $P(x)$ 
is
\[
P^{-1}(x) = \frac{1}{\ell} \sum_{i=0}^{\ell-1}  \sum_{j=0}^{\ell-1} \gamma^{k_{\varphi(i)}^\prime}  \zeta^{-j \varphi(i) }x^{r_{\varphi(i)}^\prime+js}
\]
where $g_i = r_i^{-1} \bmod{s}$, 
 $r_{\varphi(i)}^\prime = g_i$, and $k_{\varphi(i)}^\prime \equiv -k_ig_i + i (1-r_ig_i) \pmod{q-1}$. 
Moreover,  Theorem 2 says that a permutation polynomial $P(x)$ is an involution if and only if $r_{\varphi(i)} \equiv r_i^{-1} \pmod{s}$ and  $k_{\varphi(i)} \equiv -k_ig_i + i(1-r_ig_i)\pmod{q-1}$ for any $i=0, 1, \ldots, \ell-1$.  
Therefore the focus of our algorithm is to search for these pairs of integer tuples. It involves only modular conditions and can be easily implemented. \\



\small{

{\bf Input}: integers $q$, $n$,  and the index $\ell$. 

{\bf Output}: 

integer vectors $r$ and $k$ of length $\ell$ for PPs with index at most $\ell$; 

integer vectors $rr$ and $kk$ of length $\ell$ for the inverse of each one of above PPs;
			  
			  all involutions. 

{\bf Algorithm:}

define a finite field of size $q^n$ with a primitive element $a$

define $s= (q^n-1)/\ell$


define a vector r, rr, gg of length $\ell$ with integer values from $1$ to $s$

definte a vector k, kk of length $\ell$ with integer values from $0$ to $q^n-1$\\

\# find all PPs of index at most $\ell$

iterate $r[i]$ from $1$ to $s$ for all $i$ in $range(\ell)$ (i.e., $i$ from $0$ to $\ell-1$) 

 \hspace{0.5cm}    check if $\gcd(r[i], s)=1$ for all $ 0\leq i \leq \ell-1$

 \hspace{0.5cm}        if yes, iterate $k[i]$ from $0$ to $q^n-1$ for all $i$ in $range(\ell)$

 \hspace{1cm}  		check if the set $\{(k[i] + i*r[i])\% \ell ~ \mbox{for} ~ i~ \mbox{in}~ range(\ell) \}$ has size $\ell$ 
           	    
 \hspace{1cm}      if yes, output $r$ and $k$.  \\

\rmv{
		 \hspace{1cm}	\#output the polynomial presentation of PPs

 \hspace{1cm}			for j in range(l):

     \hspace{1cm}				for m in range(l):

    \hspace{1.5cm}            				$f = f+ a^{k[m]}*zeta^{-j*m}*x^{j*s+r[m]}$ 

	 \hspace{1cm}		$f = f/l$
}

	 \hspace{1cm}  \# compute the inverse PP with parameters $rr$ and $kk$
	 
	   \hspace{1cm}      for $i$ in $range(\ell)$
	 
	  \hspace{1.5cm}        define $gg[i] =  r[i]^{-1} ~(~mod~ s)$

 	 \hspace{1.5cm}	      define a finite map $\phi(i): i \rightarrow (k[i]+i*r[i]) \% \ell$ 

	 \hspace{1.5cm}	      $rr[{\phi(i)}] = gg[i]$ 

   	 \hspace{1.5cm}         $kk[\phi(i)] =  (i*(1-r[i]*gg[i])-k[i]*gg[i]) \% (q^n-1)$	    
   	 
   	   \hspace{1cm}       output the integer vectors $rr$ and $kk$ for the inverse PP\\

\rmv{
	 \hspace{1cm}             \#  output the inverse PP g(x)

           	 \hspace{1cm}     for jj in range(l):

	 \hspace{1cm}                    for mm in range(l):	

 \hspace{1.5cm}  $g = g+ a^{kk[0,mm]}*zeta^{-jj*mm}*x^{jj*s+rr[0,mm]}$ 
                
	 \hspace{1cm}                $g = g/l$

}

	 \hspace{1cm}  \# check whether the above PP is an involution

	 \hspace{1cm}  	       check if $r[\phi(i)] = gg[i]$ for all $i$ in $range(\ell)$
       
	 \hspace{1cm}          If yes, then 
	 
	 \hspace{1.5cm}  	   for $i$ in $range(\ell)$ 
 	
	 \hspace{2cm}  		check if $k[\phi(i)] = (- k[i]*gg[i] + i*(1-r[i]*gg[i])) \% (q^n-1)$

	 \hspace{1.5cm}    		if yes, output the involution\\
 
}

We implemented this algorithm in SAGE and generated many examples of PPs, and their inverses, and involutions.   For example, the total number of pairs of $r$ and $k$ for generalized cyclotomic PPs over $\mathbb{F}_{25}$ with index at most $\ell \leq 2$ is $4608$. Among them, the total number of involutions is $624$.  In this paper we only demonstrate the usefulness of our algorithm by providing some interesting examples over $\mathbb{F}_{25}$ such as $(r_0, r_1)  = (1, 7)$, which are included in Appendix at the end of paper.

We want to point out that our algorithm can also generate a lot of PPs with large index $\ell$. For example, let $q^n=2^6$,  $\ell = 21$, and $a^6 + a^4 + a^3 + a + 1=0$, we can easily obtain $r =  (1, 1, 1, 1, 1, 1, 1, 1, 1, 1, 1, 1, 1, 1, 1, 1, 1, 1, 1, 1, 1)$ and  $k =(0, 0, 0, 0, 0, 0, 0, 0, 0, 0,$ $ 0, 0, 0, 0, 0, 0, 0, 0, 0, 1, 62)$. Hence
$f= (a^5 + a)x^{61} + (a^5 + a^4 + a^3 + 1)x^{58} + (a^5 + a + 1)x^{55} +
(a^5 + a^3 + a^2)x^{52} + (a^3 + a^2 + a + 1)x^{49} + (a^4 + a^3)x^{46} +
(a^5 + a^4 + a^2 + a + 1)x^{43} + (a^5 + a^3 + a^2 + a)x^{40} + (a^5 + a^4
+ a^2 + a + 1)x^{37} + (a^4 + a^2 + 1)x^{34} + (a^4 + a^2 + a + 1)x^{31} +
(a^4 + a^2)x^{28} + (a^5 + a^4 + a^3 + 1)x^{25} + (a^4 + a^3)x^{22} + (a^3
+ a^2)x^{19} + (a^3 + a^2 + a)x^{16} + (a^5 + a^4 + a^2 + a)x^{13} + (a^5 +
a^4 + a^3 + a + 1)x^{10} + (a^4 + a^3 + a + 1)x^7 + (a^5 + 1)x^4 + (a^5
+ a^3 + a^2 + a)x$ is a PP of  $\mathbb{F}_{2^6}$. In this case,  $rr= (1, 1, 1, 1, 1, 1,  1, 1, 1,  1, 1, 1, 1, 1, 1, 1, 1, 1, 1, 1, 1)$ and 
$kk= (0,  0,  0,  0,  0,  0,  0,  0,  0,  0,  0,  0,  0,  0,  0,  0,  0,  0,  0,  1, 62)$. Then  $g= (a^5 + a)x^{61} + (a^5 + a^4 + a^3 + 1)x^{58} + (a^5 + a + 1)x^{55}
+ (a^5 + a^3 + a^2)x^{52} + (a^3 + a^2 + a + 1)x^{49} + (a^4 + a^3)x^{46} +
(a^5 + a^4 + a^2 + a + 1)x^{43} + (a^5 + a^3 + a^2 + a)x^{40} + (a^5 + a^4
+ a^2 + a + 1)x^{37} + (a^4 + a^2 + 1)x^{34} + (a^4 + a^2 + a + 1)x^{31} +
(a^4 + a^2)x^{28} + (a^5 + a^4 + a^3 + 1)x^{25} + (a^4 + a^3)x^{22} + (a^3
+ a^2)x^{19} + (a^3 + a^2 + a)x^{16} + (a^5 + a^4 + a^2 + a)x^{13} + (a^5 +
a^4 + a^3 + a + 1)x^{10} + (a^4 + a^3 + a + 1)x^7 + (a^5 + 1)x^4 + (a^5
+ a^3 + a^2 + a)x$ is the inverse PP of $f$. In particular,  $f$ is  an involution.

\rmv{
\begingroup
\catcode`\^^M\active
\let ^^M\par%
\input{data.txt}%
\endgroup%
}



\rmv{
\begin{table}
\begin{tabular}{l|l|l}
PP & inverse PP & Involution \\  \hline
r =  (1, 1, 1)  k =  (0, 0, 0) &  r =  (1, 1, 1)  k =  (0, 0, 0) &  Y \\ 
\end{tabular}
\caption{$\mathbb{F}_{2^6}$, $\ell = 3$}. 
\end{table}
}

It is written  in \cite{YoussefTavaresHeys:96}  that the graphs obtained by some experimental
results indicate a strong correlation between the cryptographic
properties and the number of fixed points and suggest that the
S-boxes should be chosen to contain few fixed points. This motivated Charpin, Mesnager and Sumanta to  study  involutions without fixed points in \cite{CharpinMesnagerSumanta:15}. A systematic study on involutions over the $\mathbb{F}_{2^n}$ was published later in \cite{CharpinMesnagerSumanta:16}. Here we want to point out that $0$ is always a fixed point and the number of nonzero fixed points of our  generalized cyclotomic PPs is equal to
\begin{eqnarray*}
&&\# \{ (i, j) \mid A_i(\gamma^{i+j\ell})^{r_i} = \gamma^{i+j\ell},  0 \leq i \leq \ell-1, 0\leq j \leq s-1\}
\end{eqnarray*}
We note that the condition $A_i(\gamma^{i+j\ell})^{r_i} = \gamma^{i+j\ell}$ reduces to a linear congruence 
\[
ind_\gamma(A_i) + (i+j\ell)(r_i-1) \equiv 0 \pmod{q-1}.
\]  
In order to find the number of  all fixed points, we can first determine the number of cyclotomic cosets that are fixed (i.e., all these $i$'s such that  $A_i^s \zeta^{ir_i} = \zeta^i)$. That is,  the number of $i$'s satisfying $\varphi(i)=i$ because $\varphi(i)-i \equiv ind_\gamma(A_i) + i(r_i-1) \equiv 0 \pmod{\ell}$.   For each such $i$,  we can determine the number of  $j$'s satisfying $ind_\gamma(A_i) + (i+j\ell)(r_i-1) \equiv 0 \pmod{q-1}$.  We note that $ ind_\gamma(A_i) + i(r_i-1) \equiv \ell t \pmod{q-1}$.  In this case, the above linear congruence reduces to $ t + j(r_i-1) \equiv 0 \pmod{s}$.   Therefore, the number of nonzero fixed points is equal to the sum of $1$ the number of pairs $(i, j)$ such that $\varphi(i) =i$ and  $ t + j(r_i-1) \equiv 0 \pmod{s}$.    Together with Theorem~\ref{thm2},  one  can  construct generalized cyclotomic mappings PPs that are involutions with few 
fixed points in an algorithmic way.



\vskip 1cm

\noindent {\bf Appendix:} In the following tables, $a$ is a primitive element such that $a^2 + 4a + 2 = 0$ in $\mathbb{F}_{25}$. 
The total number of generalized cyclotomic permutation polynomials of the form 
\[
 P(x) = \frac{1}{\ell} \sum_{i=0}^{\ell-1} \sum_{j=0}^{\ell-1} a^{k_i}  \zeta^{-ji}x^{r_i+js} \in \F_{25}[x] 
 \]
such that $\ell \leq 2$ is $4608$ and the total number of involutions is $624$.

In particular,  all these generalized cyclotomic permutation polynomials 
such that $r = (r_0, r_1) =  (1, 7)$ over $\mathbb{F}_{25}$ are given in tables from page 9 to page 20. From page 9 to page 14, $r$ and $k$ are pairs of integer tuples for a permutation polynomial $f(x)$. In contrast, $rr$ and $kk$ are pairs of integer tuples for the inverse polynomial of $f(x)$. The last column indicates whether $f$ is an involution or not.  The explicit polynomial expression of the inverse polynomial $f^{-1}(x)$ and its corresponding $f(x)$ are given in tables from page 15 to page 20.

Moreover, all these involutions, the corresponding pairs of integers, and the number of nonzero fixed points are given in tables from page 21 to page 33. We note that $0$ is always a fixed point for these involutions. 



\hskip -2cm
\small{





\end{document}